\newtheorem{theorem}{Theorem}%[section]
\newtheorem{proposition}{Proposition}
\newtheorem{lemma}{Lemma}
\theoremstyle{remark}
\newtheorem{assumption}{\textbf{Assumption}}
\newcommand{\p}{P}
\newcommand{\var}{\operatorname{var}}
\newcommand{\sqrtb}[1]{\left(#1\right)^{1/2}}
\renewcommand{\sqrt}[1]{#1^{1/2}}
\begin{document}

	%% Here are the title, author names and addresses
	\title{Tighter confidence intervals for quantiles of heterogeneous data}
	
	\author{John H.J. Einmahl\\ Tilburg University\and Yi He \\ Eastern Institute of Technology, Ningbo}
	%\affil{Department of Econometrics and Operations Research, Tilburg University,\\ Tilburg, The Netherlands
		%\email{j.h.j.einmahl@tilburguniversity.edu}}
	
	%\author{Yi He}
	%\affil{School of Mathematical Sciences, Eastern Institute of Technology, Ningbo,\\ Zhejiang, China \email{yihe@eitech.edu.cn}}
	
	\maketitle
	
	\begin{abstract}
		It is well known that the asymptotic variance of sample quantiles can be reduced under heterogeneity relative to the i.i.d.\ setting. However, asymptotically correct confidence intervals for quantiles are  not yet available. We propose a novel, consistent estimator of the reduced asymptotic variance arising when quantiles are computed from groups of observations,  leading to asymptotically correct confidence intervals. Simulation studies show that our confidence intervals are substantially  shorter than those in the i.i.d.\ case and attain nearly correct coverage across a wide range of heterogeneous settings. %in contrast to the bootstrap, which is generally conservative.
	\end{abstract}
	
Key words: Heterogeneous data; quantile; confidence interval.

\section{Introduction}
Quantiles are fundamental characteristics of a probability distribution; see, e.g., the monographs \cite{R1989}, \cite{DN2004}, and  \cite{ABN2008}. The standard framework for nonparametric quantile inference assumes independent and identically distributed (i.i.d.) data, yet in many modern applications the observations are not identically distributed. 
In this article we consider the much more general framework in which the data are independent but each observation arises from its own distribution, potentially very different from the others. 
Specifically, we observe independent univariate data from a triangular array
\begin{equation*}
	X_i \sim F_{n,i}, \qquad 1 \le i \le n,
\end{equation*}
where $F_{n,i}$ denotes the distribution of $X_i=X_i^{(n)}$, that is, $\p\bigl(X_i^{(n)} \le x\bigr) = F_{n,i}(x)$ for $x \in \mathbb{R}$ and $i = 1,\ldots,n$. Throughout, we define the quantile function corresponding to any distribution function $F$ as its left-continuous inverse.

Motivated by the i.i.d.\ case, a statistician typically pools the observations, forms the order statistics $X_{1:n} \le \cdots \le X_{n:n}$, and computes the empirical quantile $\widehat{Q}(\tau)$ at a desired level $\tau \in (0,1)$.  The inferential target of  this empirical quantile  now turns out to be $\bar{Q}_n(\tau)$, the quantile corresponding to %the 
%It is well known that this procedure is equivalent to estimating the quantile of the average distribution:
%\begin{equation*}
%	\sup_{0<\tau<1} \bigl| \widehat{Q}_n(\tau) - \bar{Q}_n(\tau) \bigr| \to 0 
%	\quad \text{almost surely},
%\end{equation*}
%where $\bar{Q}_n$ is the quantile function of 
the average distribution 
$\bar{F}_n = n^{-1} (F_{n,1}+\cdots+F_{n,n})$.  %Consequently, our inferential target is the pooled quantile function $\bar{Q}_n$. 
A statistically appealing yet well established phenomenon is that the asymptotic variance of the empirical distribution function of  independent data is maximized under i.i.d.\ sampling from $\bar{F}_n$, meaning that the asymptotic variance can be (much) smaller  for heterogeneous data; 
see, for example, \cite{S1973}, \cite{Z1976}, and Chapter~25 of \cite{SW1986}.

Despite these classical results, their practical relevance for statistical inference has remained limited. 
To our knowledge, no asymptotically correct nonparametric method exists in general that quantifies the variance reduction attributable to heterogeneity. Statisticians are therefore compelled either to use i.i.d.-based (very) conservative confidence intervals, or to restrict to the, often unrealistic,  setup  where the heterogeneity is so close to homogeneity  that the limiting distribution coincides with the i.i.d.\ limit. %see \cite{L1988} and \cite{LS1995} for related discussion in the context of bootstrap inference.

In the next section, we develop a new method that yields asymptotically \textit{correct} confidence intervals for $\bar{Q}_n(\tau)$ at any fixed $\tau \in (0,1)$ under general heterogeneity. Our procedure provides valid inference without requiring the existence of the quantile density function, by exploiting the Galois connection between the distribution and quantile functions. As a result, the resulting intervals can be substantially tighter than those obtained under i.i.d.\ assumptions. Our approach partitions the data into $g$ groups of sizes $m_j$ (with $n = m_1 + \cdots + m_g$), such that observations within each group share a common distribution, while distributions may differ arbitrarily across groups.

\begin{assumption}\label{ass:pair}
	For all $n\geq 2$, there exists a partition  $\mathcal{I}_{n,j}, j=1, \ldots g,$ of $\{1,\ldots,n\}$  such that $F_{i} = F_{i'}$ for all $i, i' \in \mathcal{I}_{n,j}$, and each $\mathcal{I}_{n,j}$ contain at least 2 indices. %and $\mathcal{I}_{n,j}\cap\mathcal{I}_{j'}=\emptyset$ for $j\neq j'$. 
  % Let $m_j$ denote the cardinality of $\mathcal{I}_{n,j}$. We assume that $	\max_{1 \le j \le g} m_j = o(n)$ as $n\rightarrow\infty$.
\end{assumption}
\noindent %The number of groups $g = g(n)$ must increase with the sample size $n$. 
The i.i.d.\ setting is a special case with only $g=1$ group containing all observations. A canonical example of a heterogeneous setting is the balanced case in which $m_j \equiv m$, so that $n = mg$; in particular, $m$ can be $2$, which yields the maximal number of groups in our setup. %We treat the group sizes as non-stochastic but potentially heterogeneous. 
%If they are generated independently of the observations, then our conclusions continue to hold provided that the condition on their divergence rate is satisfied almost surely. 
 
We also provide a simulation study which shows that our confidence intervals are substantially  shorter than those in the i.i.d.\ case and their nearly correct coverages  across various heterogeneous settings. The proofs are given in Section \ref{pro} at the end. 

\section{Main Results}
Consider the empirical distribution function
\begin{equation*}
	\widehat F(x)=n^{-1}\sum_{i=1}^{n}\mathds{1}\bigl\{X_i\le x\bigr\},
\end{equation*}
which estimates the average distribution $\bar F_n$, which we assume to be continuous, so that its (left-continuous) quantile function $\bar Q_n$ is strictly increasing (for all $n\in\mathbb{N}$). Let $\tau\in(0,1)$ denote a given quantile level. The estimation variance of $\widehat F$ at the target quantile $\bar Q_n(\tau)$, after appropriate scaling and writing $F_i=F_{n,i}$, is
\begin{equation*}
	V_n(\tau)
	=\var\left(\sqrt{n}\,\widehat F\bigl(\bar Q_n(\tau)\bigr)\right)
	= n^{-1}\sum_{i=1}^{n}F_i\bigl(\bar Q_n(\tau)\bigr)\Bigl[1-F_i\bigl(\bar Q_n(\tau)\bigr)\Bigr].
\end{equation*}
In particular, for i.i.d.\ data this variance is constant, since
\begin{equation*}
	V_n(\tau)\equiv \tau(1-\tau),\qquad n\ge 1.
\end{equation*}
This corresponds to the worst case because, in general, by the Cauchy--Schwarz inequality,
\begin{align*}
    V_n(\tau)
    =\tau-n^{-1}\sum_{i=1}^{n}F_i^2\bigl(\bar Q_n(\tau)\bigr)
    \leq \tau-\left(n^{-1}\sum_{i=1}^{n}F_i\bigl(\bar Q_n(\tau)\bigr)\right)^2
    =\tau(1-\tau).
\end{align*}
The Lindeberg condition then holds provided that
\begin{assumption}\label{ass:lindeberg}
	$nV_n(\tau)
	=\sum_{i=1}^{n}F_i\bigl(\bar Q_n(\tau)\bigr)\bigl[1-F_i\bigl(\bar Q_n(\tau)\bigr)\bigr]
	\rightarrow\infty$
	as $n\rightarrow\infty$.
\end{assumption}
This condition is trivial for i.i.d.\ data, for which
$nV_n(\tau)=n\tau(1-\tau)\rightarrow\infty$. It also holds whenever $V_n(\tau)$ is bounded away from zero, for example when $V_n(\tau)$ stays constant for large $n$. %=V_{n_0}(\tau)$ for some finite $n_0$ and all $n\ge n_0$. 
Under this condition, the Lindeberg central limit theorem yields
\begin{equation}\label{eqn:clt}
	\sqrtb{n/V_n(\tau)}\bigl(\widehat F\bigl(\bar Q_n(\tau)\bigr)-\tau\bigr)
	\xrightarrow{d}\mathcal{N}(0,1),
\end{equation}
where $\mathcal{N}(0,1)$ denotes a standard normal random variable.

For statistical inference, the heterogeneous variance $V_n(\tau)$ must be replaced by a ratio-consistent estimator, which is not available in the existing literature for heterogeneous data. We now provide such an estimator, thereby enabling the construction of the confidence intervals described above. Under Assumption~\ref{ass:pair},
\begin{equation*}
	V_n(\tau)=n^{-1}\sum_{j=1}^{g}\sum_{i\in\mathcal{I}_{n,j}}
	F_i\bigl(\bar Q_n(\tau)\bigr)\bigl\{1-F_i\bigl(\bar Q_n(\tau)\bigr)\bigr\}.
\end{equation*}
For all $i,\ell_1,\ell_2\in\mathcal{I}_{n,j}$ with $\ell_1\ne\ell_2$, independence implies
\begin{align*}
	F_i\bigl(\bar Q_n(\tau)\bigr)\bigl\{1-F_i\bigl(\bar Q_n(\tau)\bigr)\bigr\}
	&= \p\bigl(X_{\ell_1}\le Q_n(\tau),\,X_{\ell_2}>Q_n(\tau)\bigr).
\end{align*}
Averaging over all possible pairs within each group yields
\begin{align*}
	V_n(\tau)
	= n^{-1}\sum_{j=1}^{g}\frac{1}{m_j-1}
	\sum_{\ell_1,\ell_2\in\mathcal{I}_{n,j}}
	\p\bigl(X_{\ell_1}\le Q_n(\tau),\,X_{\ell_2}>Q_n(\tau)\bigr),
\end{align*}
where $\p\bigl(X_{\ell}\le Q_n(\tau),\,X_{\ell}>Q_n(\tau)\bigr)=0$ by definition and recall that $m_j$ denotes the cardinality of $\mathcal{I}_{n,j}$. This representation motivates the following estimator
\begin{align*}
	\widehat V(\tau)
	=&\,n^{-1}\sum_{j=1}^{g}\frac{1}{m_j-1}
	\sum_{\ell_1,\ell_2\in\mathcal{I}_{n,j}}
	\mathds{1}\bigl\{X_{\ell_1}\le \widehat Q(\tau),\,X_{\ell_2}>\widehat Q(\tau)\bigr\}\\
	=&\,n^{-1}\sum_{j=1}^{g}\frac{1}{m_j-1}
	\left(\sum_{\ell\in\mathcal{I}_{n,j}}\mathds{1}\{X_{\ell}\le \widehat Q(\tau)\}\right)
	\left(\sum_{\ell\in\mathcal{I}_{n,j}}\mathds{1}\{X_{\ell}>\widehat Q (\tau)\}\right).
\end{align*}
Both the population quantity $V_n(\tau)$ and its estimate $\widehat V(\tau)$ are rank-based and hence invariant under strictly increasing transformations of the data. The following proposition gives its ratio consistency.

\begin{proposition}\label{lem:consistency}
	Under Assumptions~\ref{ass:pair} and \ref{ass:lindeberg}, for every $\tau\in(0,1)$,
	\begin{equation*}
		\widehat V(\tau)/V_n(\tau)\xrightarrow{p}1,
	\end{equation*}
	where $\xrightarrow{p}$ denotes convergence in probability.
\end{proposition}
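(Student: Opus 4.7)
The plan is a two-stage argument. In the first stage, I would establish ratio consistency for the oracle $\widetilde V(\tau)$ defined by replacing $\widehat Q(\tau)$ with $\bar Q_n(\tau)$ in $\widehat V(\tau)$. In the second stage, I would show that this replacement contributes a perturbation of order $o_p(V_n(\tau))$; the Galois connection between $\widehat F$ and $\widehat Q$ is the key tool there and lets us bypass any smoothness assumption on $\bar F_n$ at $\bar Q_n(\tau)$.

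For the first stage, by Assumption~\ref{ass:pair} the group count $N_j^-(\bar Q_n(\tau)):=\sum_{\ell\in\mathcal{I}_{n,j}}\mathds{1}\{X_\ell\le\bar Q_n(\tau)\}$ is Binomial$(m_j,p_j)$ with $p_j=F_i(\bar Q_n(\tau))$ constant on $\mathcal{I}_{n,j}$, and the representation of $V_n(\tau)$ derived in the excerpt immediately yields $E[\widetilde V(\tau)]=V_n(\tau)$. A routine fourth-moment calculation for the Binomial gives $\var\bigl(N_j^-(\bar Q_n(\tau))N_j^+(\bar Q_n(\tau))\bigr)\le C m_j^3 p_j(1-p_j)$; combining between-group independence with the elementary estimates $m_j^3/(m_j-1)^2\le 4m_j$ (for $m_j\ge 2$) and $\sum_j m_j p_j(1-p_j)=nV_n(\tau)$ then gives $\var(\widetilde V(\tau))=O(V_n(\tau)/n)$. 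Chebyshev together with Assumption~\ref{ass:lindeberg} now yields $\widetilde V(\tau)/V_n(\tau)\xrightarrow{p}1$.

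For the second stage, the decisive tool is an algebraic Lipschitz bound in $\widehat F$. Writing $\widetilde V_x(\tau)$ for $\widehat V(\tau)$ evaluated at a deterministic $x$, for $x_1\le x_2$ the identity
\[ N_j^-(x_2)N_j^+(x_2)-N_j^-(x_1)N_j^+(x_1)=\Delta_j\bigl(m_j-2N_j^-(x_1)-\Delta_j\bigr),\qquad \Delta_j:=N_j^-(x_2)-N_j^-(x_1)\ge 0, \]
has absolute value at most $m_j\Delta_j$, and using $m_j/(m_j-1)\le 2$ yields $|\widetilde V_{x_2}(\tau)-\widetilde V_{x_1}(\tau)|\le 2\bigl(\widehat F(x_2)-\widehat F(x_1)\bigr)$. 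I would then pick $\delta_n\to 0$ with $\sqrtb{V_n(\tau)/n}\ll\delta_n\ll V_n(\tau)$, which is possible precisely because $nV_n(\tau)\to\infty$, and use continuity and strict monotonicity of $\bar F_n$ to choose $x_1<\bar Q_n(\tau)<x_2$ with $\bar F_n(x_i)=\tau\mp\delta_n$. The Galois connection $\widehat Q(\tau)\le x\Leftrightarrow\widehat F(x)\ge\tau$ together with Chebyshev on $\widehat F(x_i)-\bar F_n(x_i)$, whose variance is $O((V_n(\tau)+\delta_n)/n)$ by the elementary estimate $|n\var(\widehat F(x))-V_n(\tau)|\le|\bar F_n(x)-\tau|$, delivers both $P\bigl(\widehat Q(\tau)\in(x_1,x_2]\bigr)\to 1$ and $\widehat F(x_2)-\widehat F(x_1)=2\delta_n+O_p\bigl(\sqrtb{V_n(\tau)/n}\bigr)=O_p(\delta_n)=o_p(V_n(\tau))$. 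Plugging into the Lipschitz bound gives $|\widehat V(\tau)-\widetilde V(\tau)|=o_p(V_n(\tau))$, and combining with the first stage completes the proof.

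I expect the second stage to be the main obstacle: one must control the effect of the random $\widehat Q(\tau)$ without any smoothness of $\bar F_n$ near $\bar Q_n(\tau)$, and it is the combination of the algebraic Lipschitz inequality in $\widehat F$ with the Galois connection that makes this possible. This pairing also explains why the admissible scale of $\delta_n$ matches exactly the condition $nV_n(\tau)\to\infty$ of Assumption~\ref{ass:lindeberg}.
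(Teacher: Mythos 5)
Your proof is correct. The first stage (unbiasedness of the oracle $\widetilde V(\tau)$ via the binomial representation, a fourth-moment bound giving $\var(\widetilde V(\tau))=O(V_n(\tau)/n)$, then Chebyshev) coincides with the paper's argument. Your second stage, however, takes a genuinely different route. The paper controls $|\widehat V(\tau)-\widetilde V(\tau)|$ by combining the uniform Bretagnolle/DKW-type rate $\sup_\tau|\tau-\bar F_n(\widehat Q(\tau))|=O_{\p}(n^{-1/2})$ with an expected modulus-of-continuity bound $E\bigl(\sup_{|\tau'-\tau|<\delta}|\widetilde V(\tau')-\widetilde V(\tau)|\bigr)<2\delta$, applied at $\delta_n=M n^{-1/2}$ via Markov; the resulting remainder is of order $1/(n^{1/2}V_n(\tau))$, so the paper's proof as written in fact needs $n^{1/2}V_n(\tau)\to\infty$, which is stronger than Assumption~\ref{ass:lindeberg} when $V_n(\tau)\to0$. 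You instead bracket $\widehat Q(\tau)$ between deterministic points $x_1<\bar Q_n(\tau)<x_2$ with $\bar F_n(x_i)=\tau\mp\delta_n$, use the Galois connection plus pointwise Chebyshev bounds whose variance is of order $V_n(\tau)/n$ rather than the worst-case $1/n$, and then apply your algebraic Lipschitz bound $|\widetilde V_{x_2}(\tau)-\widetilde V_{x_1}(\tau)|\le 2(\widehat F(x_2)-\widehat F(x_1))$, which is the deterministic-argument analogue of the paper's Lemma~\ref{lem:tight}. This permits the choice $(V_n(\tau)/n)^{1/2}\ll\delta_n\ll V_n(\tau)$, which exists precisely under $nV_n(\tau)\to\infty$, so your argument establishes the proposition under Assumption~\ref{ass:lindeberg} exactly as stated and is sharper than the paper's in the regime $V_n(\tau)\to0$. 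The only cosmetic imprecision is that $\widehat F(x_2)-\widehat F(x_1)=2\delta_n+O_p\bigl(((V_n(\tau)+\delta_n)/n)^{1/2}\bigr)$ rather than $2\delta_n+O_p\bigl((V_n(\tau)/n)^{1/2}\bigr)$; since $n\delta_n\to\infty$ the extra term is still $o_p(\delta_n)$, so nothing breaks.
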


Combining Proposition~\ref{lem:consistency} with the CLT \eqref{eqn:clt} yields the main result.
\begin{theorem}\label{thm:clt}
	Under Assumptions~\ref{ass:pair} and \ref{ass:lindeberg}, for every $\tau\in(0,1)$,
	\begin{equation*}
		\left(\frac{n}{\widehat V(\tau)}\right)^{1/2}
		\bigl(\widehat F(\bar Q_n(\tau))-\tau\bigr)
		\xrightarrow{d}\mathcal{N}(0,1).
	\end{equation*}
\end{theorem}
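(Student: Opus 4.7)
The plan is to obtain the stated result as a direct consequence of Proposition~\ref{lem:consistency} combined with the central limit theorem \eqref{eqn:clt}, via Slutsky's theorem. Since both ingredients are already in hand from the earlier parts of Section~2, the argument reduces to an essentially algebraic manipulation of the limiting quantities, and nothing new needs to be proved about the underlying triangular array.

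First I would write the target statistic in the factorized form
\begin{equation*}
\sqrtb{n/\widehat V(\tau)}\bigl(\widehat F(\bar Q_n(\tau))-\tau\bigr)
= \sqrtb{V_n(\tau)/\widehat V(\tau)}\cdot \sqrtb{n/V_n(\tau)}\bigl(\widehat F(\bar Q_n(\tau))-\tau\bigr).
\end{equation*}
The rightmost factor converges in distribution to $\mathcal{N}(0,1)$ by \eqref{eqn:clt}, which is valid under Assumption~\ref{ass:lindeberg}.

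Next, I would verify that the scalar prefactor $\sqrtb{V_n(\tau)/\widehat V(\tau)}$ tends to one in probability. Proposition~\ref{lem:consistency} gives $\widehat V(\tau)/V_n(\tau)\xrightarrow{p}1$, and since the map $x\mapsto x^{-1/2}$ is continuous at $x=1$, the continuous mapping theorem yields $\sqrtb{V_n(\tau)/\widehat V(\tau)}\xrightarrow{p}1$. Applying Slutsky's theorem to the product above then delivers the claimed $\mathcal{N}(0,1)$ limit.

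There is no real obstacle in this step: all of the analytical work has been invested in establishing the Lindeberg CLT \eqref{eqn:clt} (driven by the Cauchy--Schwarz bound on $V_n(\tau)$ together with Assumption~\ref{ass:lindeberg}) and in proving the ratio consistency of $\widehat V(\tau)$ in Proposition~\ref{lem:consistency}. Granted those two, Theorem~\ref{thm:clt} follows as an immediate corollary, and its proof is correspondingly brief.
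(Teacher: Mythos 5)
Your Slutsky argument is correct and is exactly the step the paper summarizes in one line before stating the theorem (``Combining Proposition~\ref{lem:consistency} with the CLT \eqref{eqn:clt} yields the main result''). However, the paper's actual proof of Theorem~\ref{thm:clt} consists almost entirely of the piece you have taken for granted: it verifies the central limit theorem \eqref{eqn:clt} itself, i.e.\ it checks the Lindeberg condition for the triangular array $Z_{n,i}=(nV_n(\tau))^{-1/2}\bigl(\mathds{1}[X_i\le \bar Q_n(\tau)]-\tau\bigr)$. In the main text \eqref{eqn:clt} is only asserted, with its justification deferred to the proof section, so citing it as ``valid under Assumption~\ref{ass:lindeberg}'' leaves the one substantive step of the theorem unproved. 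The verification is short but should be present: the variances sum to one by the definition of $V_n(\tau)$, and since $|Z_{n,i}-EZ_{n,i}|\le (nV_n(\tau))^{-1/2}$ uniformly in $i$, the Lindeberg sum $\sum_i E\bigl[(Z_{n,i}-EZ_{n,i})^2\mathds{1}(|Z_{n,i}-EZ_{n,i}|>\epsilon)\bigr]$ is bounded by $\mathds{1}\bigl((nV_n(\tau))^{-1/2}>\epsilon\bigr)$, which vanishes for large $n$ because $nV_n(\tau)\to\infty$ under Assumption~\ref{ass:lindeberg}. Adding this paragraph (or an explicit pointer to wherever \eqref{eqn:clt} is proved) makes your argument complete; the Slutsky/continuous-mapping part is fine as written, modulo the routine remark that $\widehat V(\tau)>0$ with probability tending to one since $\widehat V(\tau)/V_n(\tau)\xrightarrow{p}1$.
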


For $\alpha\in(0,1)$, let $\Phi^{-1}$ denote the standard normal quantile function and define
\begin{equation*}
	\widehat c(\alpha)
	= \sqrt{\widehat V(\tau)/n}\,\Phi^{-1}(1-\alpha/2).
\end{equation*}
By inverting via the Galois inequalities and using the continuity of $\bar F_n$, we obtain the asymptotic $100(1-\alpha)\%$ confidence interval for $\bar Q_n(\tau)$,
\begin{equation*}
	\bigl(\widehat Q(\tau-\widehat c(\alpha)),\,\widehat Q(\tau+\widehat c(\alpha))\bigr),
\end{equation*}
where $\widehat Q$ denotes the empirical quantile function. This interval is asymptotically correct since, as $n\to\infty$,
\begin{equation*}
	\p\bigl(
	\widehat Q(\tau-\widehat c(\alpha))
	<\bar Q_n(\tau)
	< \widehat Q(\tau+\widehat c(\alpha))
	\bigr)\to 1-\alpha.
\end{equation*}

\section{Simulations}
We consider three times four heterogeneous data generating mechanisms. For all $i \in \mathcal{I}_{n,j}$, $j=1,\ldots,g$, the distributions are specified as follows:
\begin{enumerate}
	\item[I] $X_i \sim \mathcal{N}(\mu_j,1)$ with mean $\mu_j = (\log j)^{\gamma/2}$;
	\item[II] $X_i \sim \operatorname{Exp}(\lambda_j)$ with rate $\lambda_j = 1/j^{\gamma}$;
	\item[III] $X_i \sim \operatorname{Unif}(x_j^*/2,\, x_j^*)$ with endpoint $x_j^* = 1 + e^\gamma j/g$;
%	\item $X_i \sim t_{\nu_j}$ with degrees of freedom $\nu_j = 1 + [\gamma \log j]$, and $[x]$ denotes the nearest integer to $x$.
\end{enumerate}
We vary the parameter $\gamma \in \{1,2,3,4\}$ to control the degree of heterogeneity. 
%, including the special case $\gamma = 0$, where the variables are identically distributed. 
\begin{figure}[!h]
	\includegraphics[width=1\linewidth]{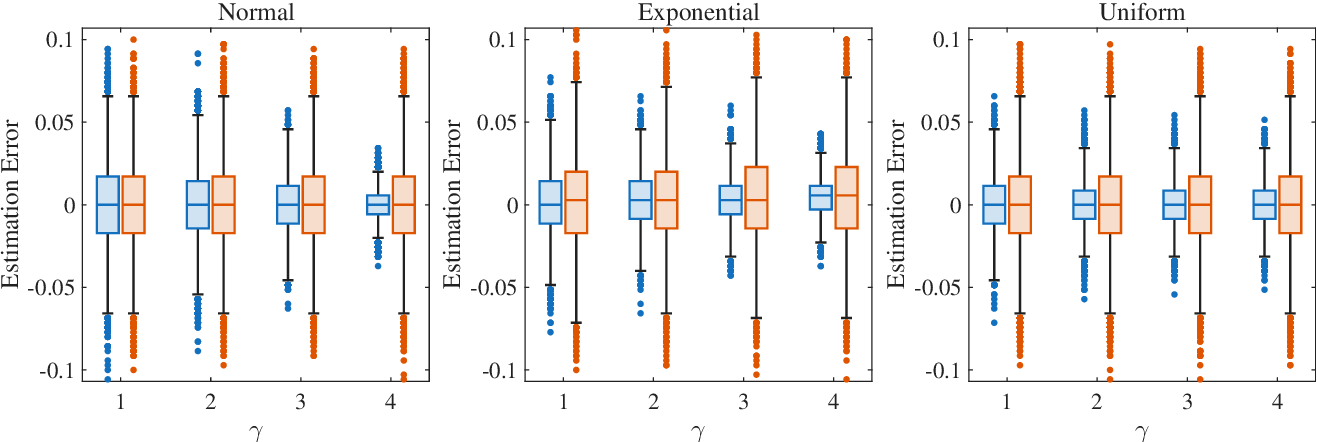}
	\includegraphics[width=1\linewidth]{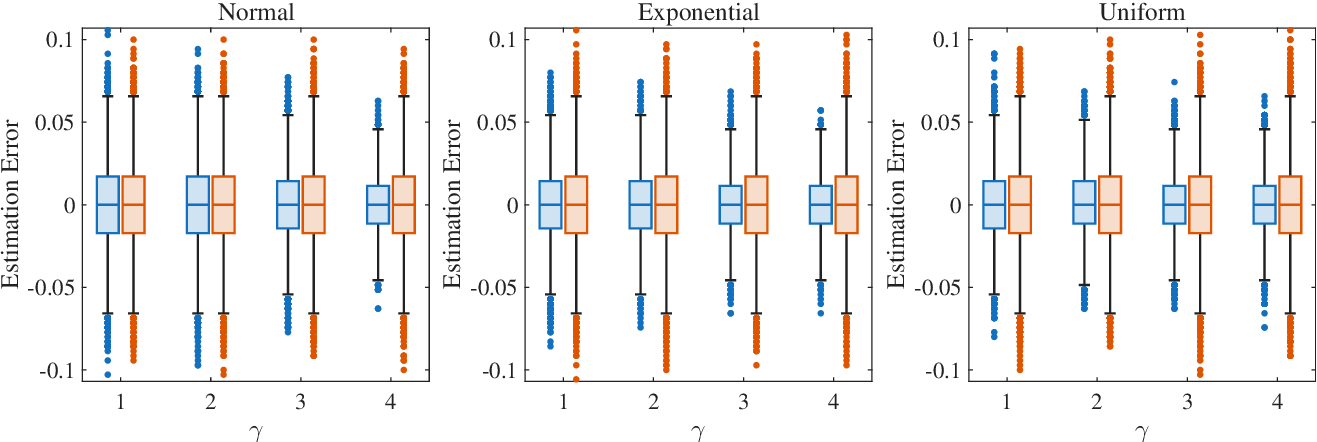}
	\caption{Comparison of quantile–level estimation errors for heterogeneous data (blue) and i.i.d.\ samples from the average distribution (red) under twin groups (top) and triangular groups (bottom).}
	\label{fig:boxplot:balanced}
\end{figure}

We fix the total sample size of $n = 350$, and compare the following designs for generating group sizes:
\begin{enumerate}
	\item[(a)] Twin groups with constant size $m_j \equiv  2$, with $g=175$ groups;
	\item[(b)] Triangular groups with $m_j = j+1$, with $g=25$ groups.
\end{enumerate}
We report results based on $10{,}000$ Monte Carlo replications for the median ($\tau = 0.5$). Findings for other quartiles are similar and thus omitted for brevity.

Figure~\ref{fig:boxplot:balanced} compares through boxplots the quantile level estimation error $\widehat{F}(\bar Q_n(\tau))-\tau$ for heterogeneous data (blue) with i.i.d.\ data drawn from the average distribution (red). We see substantial variation reductions. The twin groups exhibit greater heterogeneity than the triangular groups and therefore generally show smaller variation. The estimation error tends to display reduced variability as heterogeneity increases with $\gamma$. In other words, standard errors based on the i.i.d.\ model are overly large and lead to too wide confidence intervals.

%\begin{figure}[!h]
%	\figuresize{.45}
%	\figurebox{20pc}{25pc}{}[qt50_BoxPlot_m3_unbalanced]
%	\caption{Comparison of quantile–level estimation errors for heterogeneous data (blue) and i.i.d.\ samples from the average distribution (red) under unbalanced groups.}
%	\label{fig:boxplot:unbalanced}
%\end{figure}

Tables~\ref{tab:covprob} reports that the coverage probabilities for the median obtained from the 95\% confidence intervals using our method allowing for heterogeneity. Our tighter confidence intervals consistently attain coverage levels close to the nominal value.

\begin{table}[!h]
	\centering
	\caption{Coverage probabilities of our confidence intervals. }
	{\begin{tabular}{cccc}
			\multicolumn{4}{c}{Twin Groups}\\
			\hline
			& I & II & III \\
			\hline
			$\gamma=1$ & 94.9 & 94.8 & 95.0 \\
			$\gamma=2$ & 95.2 & 94.7 & 94.6 \\
			$\gamma=3$ & 94.6 & 93.6 & 94.6 \\
			$\gamma=4$ & 94.7 & 93.0 & 94.7 \\
			\hline
		\end{tabular}
		\quad\qquad
		\begin{tabular}{cccc}
			\multicolumn{4}{c}{Triangular Groups}\\
			\hline
			& I & II & III \\
			\hline
			$\gamma=1$ & 95.0 & 94.8 & 94.8 \\
			$\gamma=2$ & 95.0 & 94.9 & 95.0 \\
			$\gamma=3$ & 94.6 & 95.2 & 94.8 \\
			$\gamma=4$ & 94.8 & 94.8 & 94.8 \\
			\hline
		\end{tabular}\\
		The nominal confidence level is 95\%. The values are in percentage.
	}
	\label{tab:covprob}
\end{table}

%\appendix

%\appendixone
\section{%Appendix: 
Proofs} \label{pro}
We need some more lemmas. The first one gives the well-known uniform converge rate of the empirical distribution function; see Chapter 25 of \cite{SW1986}.
\begin{lemma}\label{lem:rate}
	$\sup_{\tau\in (0,1)}|\tau-\bar{F}_n(\widehat{Q}(\tau))|=O_{\p}(n^{-1/2})$.
\end{lemma}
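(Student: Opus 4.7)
The plan is to reduce the claim to the classical uniform rate for the empirical process of independent, non-identically distributed observations, by passing through the Galois connection between $\widehat F$ and $\widehat Q$.

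First I would exploit the definition of $\widehat Q$ as the left-continuous inverse of the right-continuous step function $\widehat F$: this gives $\widehat F(\widehat Q(\tau)-)\le\tau\le\widehat F(\widehat Q(\tau))$ for every $\tau\in(0,1)$. Since $\bar F_n$ is assumed continuous and $\bar F_n=n^{-1}\sum_{i=1}^n F_i$ is an average of increasing functions, each $F_i$ must itself be continuous, so with probability one the $X_i$ are pairwise distinct and every jump of $\widehat F$ has size exactly $1/n$. Consequently, on an event of probability one,
\[
\sup_{\tau\in(0,1)}\bigl|\widehat F(\widehat Q(\tau))-\tau\bigr|\le \frac{1}{n}.
\]

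Next I would insert and subtract $\widehat F(\widehat Q(\tau))$ and apply the triangle inequality:
\[
\sup_{\tau\in(0,1)}\bigl|\tau-\bar F_n(\widehat Q(\tau))\bigr|\le \frac{1}{n}+\sup_{x\in\mathbb R}\bigl|\widehat F(x)-\bar F_n(x)\bigr|,
\]
so the lemma reduces to $\sup_x|\widehat F(x)-\bar F_n(x)|=O_{\p}(n^{-1/2})$. At any fixed $x$, $n[\widehat F(x)-\bar F_n(x)]$ is a centered sum of $n$ independent Bernoulli variables with total variance at most $n/4$, so Chebyshev already yields the pointwise $O_{\p}(n^{-1/2})$ rate; the uniform version with the same rate is the DKW-type inequality for independent non-identically distributed data recorded in Chapter 25 of Shorack and Wellner (1986), which combines the monotonicity of $\widehat F$ and $\bar F_n$ with a concentration bound for the supremum of the empirical process.

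The main obstacle, such as it is, lies in this last step: turning the pointwise rate into a uniform rate without paying a $(\log n)^{1/2}$ factor. A naive union bound over an $n^{1/2}$-grid together with Hoeffding would give only $O_{\p}(n^{-1/2}(\log n)^{1/2})$; obtaining the sharp $O_{\p}(n^{-1/2})$ claimed in the lemma requires the independent-data DKW inequality that is being invoked via the reference to Shorack and Wellner (1986). The handling of the empirical quantile in the first step is transparent once continuity of $\bar F_n$ has been propagated to the individual $F_i$, and any discretization issues caused by possible flat stretches of $\bar F_n$ are harmless because the bound only involves the composition $\bar F_n\circ \widehat Q$.
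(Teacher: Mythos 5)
Your proposal is correct and follows essentially the same route as the paper: bound $\sup_\tau|\widehat F(\widehat Q(\tau))-\tau|$ by $O(1/n)$ via the Galois inequalities, then reduce to the uniform rate $\sup_x|\widehat F(x)-\bar F_n(x)|=O_{\p}(n^{-1/2})$ for independent non-identically distributed data, which the paper takes from Bretagnolle (1981) and you take from the equivalent DKW-type result in Shorack and Wellner. Your additional verification that continuity of $\bar F_n$ forces each $F_i$ to be continuous (hence no ties and jumps of size exactly $1/n$) is a correct elaboration of the paper's terser ``by construction'' step.
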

\begin{proof}
	From \cite{B1981},
	\begin{equation*}
		\sup_{x\in \mathbb{R}}|\widehat{F}(x)-\bar{F}_n(x)|=O_{\p}(n^{-1/2}).
	\end{equation*}
	Substituting $x=\widehat{Q}(\tau)$ yields that
	\begin{equation*}
		\sup_{\tau\in (0,1)}|\widehat{F}(\widehat{Q}(\tau))-\bar{F}_n(\widehat{Q}(\tau))|=O_{\p}(n^{-1/2}).
	\end{equation*}
	But $\widehat{F}(\widehat{Q}(\tau))=\tau+O_{\p}(1/n)$ uniformly by construction, then the lemma follows.
\end{proof}

The second lemma is needed to establish the stochastic continuity of our estimator around the target quantile level.
\begin{lemma}\label{lem:tight}
	Define the oracle estimator
	\begin{equation*}
		\widetilde{V}(\tau)=n^{-1}\sum_{j=1}^{g}\frac{1}{m_j-1}\sum_{\ell_1,\ell_2\in \mathcal{I}_{n,j}}\mathds{1}\left[ X_{\ell_1}\leq \bar{Q}_{n}(\tau),\,
		X_{\ell_2}>\bar{Q}_{n}(\tau)\right],\quad \tau\in (0,1).
	\end{equation*}
	Then, for every $\tau\in (0,1)$ and all small $\delta>0$,
	\begin{equation*}
		E\left( \sup_{|\tau'-\tau|<\delta}\left|\widetilde{V}(\tau')-\widetilde{V}(\tau) \right|\right)   <2\delta.
	\end{equation*}
\end{lemma}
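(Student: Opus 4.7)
The plan is to split the sup into right-of-$\tau$ and left-of-$\tau$ parts, handle them symmetrically, and bound each expected sup by $\delta$ via a direct count of observations in the window $(\bar Q_n(\tau-\delta),\bar Q_n(\tau+\delta)]$. The triangle inequality gives
$$\sup_{|\tau'-\tau|<\delta}|\widetilde V(\tau')-\widetilde V(\tau)| \leq \sup_{\tau'\in(\tau,\tau+\delta)}|\widetilde V(\tau')-\widetilde V(\tau)| + \sup_{\tau'\in(\tau-\delta,\tau)}|\widetilde V(\tau')-\widetilde V(\tau)|,$$
so it suffices to control the first term; the second is symmetric.

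For $\tau'\in(\tau,\tau+\delta)$, I introduce the group counts $S_j(y)=\sum_{\ell\in\mathcal I_{n,j}}\mathds 1\{X_\ell\leq y\}$, $T_j(y)=m_j-S_j(y)$, and the crossing count $\Delta_j = S_j(\bar Q_n(\tau'))-S_j(\bar Q_n(\tau))\geq 0$. Direct algebra gives the identity
$$S_j(\bar Q_n(\tau'))T_j(\bar Q_n(\tau'))-S_j(\bar Q_n(\tau))T_j(\bar Q_n(\tau)) = \Delta_j\bigl(T_j(\bar Q_n(\tau'))-S_j(\bar Q_n(\tau))\bigr).$$
Since $S_j(\bar Q_n(\tau))\geq 0$ and $T_j(\bar Q_n(\tau'))\geq 0$ have sum $m_j-\Delta_j$, their difference has absolute value at most $m_j-\Delta_j$; combined with $(m_j-\Delta_j)/(m_j-1)\leq 1$ for $\Delta_j\geq 1$ (and trivially zero for $\Delta_j=0$), this yields the key pointwise bound
$$|\widetilde V(\tau')-\widetilde V(\tau)| \leq n^{-1}\sum_{j=1}^g\Delta_j = n^{-1}\#\bigl\{\ell:X_\ell\in(\bar Q_n(\tau),\bar Q_n(\tau')]\bigr\}.$$

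The right-hand side is non-decreasing in $\tau'$, so the supremum over $\tau'\in(\tau,\tau+\delta)$ is dominated by $n^{-1}\#\{\ell:X_\ell\in(\bar Q_n(\tau),\bar Q_n(\tau+\delta)]\}$. Taking expectations and using the continuity of $\bar F_n$,
$$n^{-1}\sum_{\ell=1}^n\bigl[F_\ell(\bar Q_n(\tau+\delta))-F_\ell(\bar Q_n(\tau))\bigr] = \bar F_n(\bar Q_n(\tau+\delta))-\bar F_n(\bar Q_n(\tau)) = \delta.$$
The symmetric argument for the left-sided sup contributes another $\delta$, giving $\leq 2\delta$ in total. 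The strict inequality $<2\delta$ follows because the intermediate bound $|T_j(\bar Q_n(\tau'))-S_j(\bar Q_n(\tau))|\leq m_j-\Delta_j$ is strict whenever a group has $S_j(\bar Q_n(\tau))\in(0,m_j-\Delta_j)$, an event of positive probability for $\tau\in(0,1)$.

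I expect the main obstacle to be spotting the algebraic identity together with the non-negativity bound on $|T_j(\bar Q_n(\tau'))-S_j(\bar Q_n(\tau))|$; these jointly contribute the factor $(m_j-\Delta_j)/(m_j-1)\leq 1$ that eliminates a spurious factor of two which a naive bound via $m_j/(m_j-1)\leq 2$ would produce (yielding $4\delta$ rather than $2\delta$). Once this cancellation is isolated, monotonicity of the crossing count in $\tau'$ and the continuity of $\bar F_n$ close the argument at once.
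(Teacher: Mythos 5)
Your proof is correct, but it takes a genuinely different route from the paper's. The paper's key move is to rewrite the summand for an ordered pair $(\ell_1,\ell_2)$ as $\tfrac12\left|\mathds{1}[X_{\ell_1}\le \bar Q_n(\tau)]-\mathds{1}[X_{\ell_2}\le \bar Q_n(\tau)]\right|$; the reverse/ordinary triangle inequality then bounds the increment of $\widetilde V$ directly by $n^{-1}\sum_{i=1}^n\left|\mathds{1}[X_i\le \bar Q_n(\tau')]-\mathds{1}[X_i\le \bar Q_n(\tau)]\right|$ (the pair sum collapses because each index appears in exactly $m_j-1$ ordered pairs in each slot), and the two-sided supremum is handled in one stroke by $\widehat F(\bar Q_n(\tau+\delta))-\widehat F(\bar Q_n(\tau-\delta))$, whose expectation is $2\delta$. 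You instead work with the product representation $S_j(m_j-S_j)$, derive the increment identity $\Delta_j\bigl(T_j(\bar Q_n(\tau'))-S_j(\bar Q_n(\tau))\bigr)$, and exploit the constraint $S_j(\bar Q_n(\tau))+T_j(\bar Q_n(\tau'))=m_j-\Delta_j$ to obtain the factor $(m_j-\Delta_j)/(m_j-1)\le 1$ — this is exactly the cancellation needed to beat the naive $m_j/(m_j-1)\le 2$ bound, as you note. Your route costs a case split on $\Delta_j$ and a separate treatment of the left- and right-sided suprema, where the paper's pairwise absolute-value identity does both at once; but it buys an explicit algebraic picture of how the group products change as the threshold moves. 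One caveat applies to both arguments equally: each really establishes $\le 2\delta$ rather than the stated strict inequality (your heuristic for strictness would need to hold at the supremum-attaining $\tau'$ and be quantified, and the paper's own proof likewise ends with an equality to $2\delta$); this is immaterial downstream, since the proof of Proposition~\ref{lem:consistency} only uses the bound $2\delta$ via Markov's inequality.
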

\begin{proof}
	Observe that
	\begin{equation*}
		\widetilde{V}(\tau)=n^{-1}\sum_{j=1}^{g}\frac{1}{2(m_j-1)}\sum_{\ell_1,\ell_2\in \mathcal{I}_{n,j},\, \ell_1 \neq \ell_2}\left| \mathds{1}\left[ X_{\ell_1}\leq \bar{Q}_{n}(\tau)\right] 
		-\mathds{1}\left[ X_{\ell_2}\leq \bar{Q}_{n}(\tau)\right]\right| .
	\end{equation*}
	Then, by the triangle inequality,
	\begin{align*}
		\left| \widetilde{V}(\tau')-V(\tau)\right|
		\leq&\, n^{-1}\sum_{j=1}^{g}\frac{1}{2(m_j-1)}\sum_{\ell_1,\ell_2\in \mathcal{I}_{n,j},\, \ell_1 \neq \ell_2 }
		\left| \mathds{1}\left[ X_{\ell_1}\leq \bar{Q}_{n}(\tau')\right] 
		-\mathds{1}\left[ X_{\ell_1}\leq \bar{Q}_{n}(\tau)\right] \right|\\
		&+n^{-1}\sum_{j=1}^{g}\frac{1}{2(m_j-1)}\sum_{\ell_1,\ell_2\in \mathcal{I}_{n,j}, \, \ell_1 \neq \ell_2}
		\left| \mathds{1}\left[ X_{\ell_2}\leq \bar{Q}_{n}(\tau')\right] 
		-\mathds{1}\left[ X_{\ell_2}\leq \bar{Q}_{n}(\tau)\right] \right|\\
		=&\, n^{-1}\sum_{i=1}^{n}
		\left| \mathds{1}\left[ X_{i}\leq \bar{Q}_{n}(\tau')\right] 
		-\mathds{1}\left[ X_{i}\leq \bar{Q}_{n}(\tau)\right] \right|.
	\end{align*}
	Hence, for all small $\delta>0$
	\begin{align*}
		\sup_{|\tau'-\tau|<\delta}\left| \widetilde{V}(\tau')-V(\tau)\right|
		\leq&\, n^{-1}\sum_{i=1}^{n}\left\lbrace \mathds{1}\left[ X_{i}\leq \bar{Q}_{n}(\tau+\delta)\right]
		-\mathds{1}\left[ X_{i}\leq \bar{Q}_{n}(\tau-\delta)\right]\right\rbrace\\
		%=& \, n^{-1}\sum_{i=1}^{n}\mathds{1}\left[ X_{i}\leq \bar{Q}_{n}(\tau+\delta)\right]
		%-n^{-1}\sum_{i=1}^{n}\mathds{1}\left[ X_{i}\leq \bar{Q}_{n}(\tau-\delta)\right]\\
		=&\, \widehat{F}\left(\bar{Q}_{n}(\tau+\delta) \right)-\widehat{F}\left(\bar{Q}_{n}(\tau-\delta) \right).
	\end{align*}
	Taking expectation yields that
	\begin{align*}
		E\left( \sup_{|\tau'-\tau|<\delta}\left| \widetilde{V}(\tau')-V(\tau)\right|\right) 
		\leq \bar{F}_n\left(\bar{Q}_{n}(\tau+\delta) \right)-\bar{F}_n\left(\bar{Q}_{n}(\tau-\delta) \right)
		=2\delta,
	\end{align*}
	where the last step is due to the continuity of $\bar{F}_n$.
\end{proof}

\begin{proof}[Proof of Proposition \ref{lem:consistency}]
	Consider any fixed $\tau\in (0,1)$. Let $\epsilon>0$ be small. By Lemma \ref{lem:rate}, there exists some large constant $M$ and $\delta_n=M/\sqrt{n}$ such that
	\begin{equation*}
		\p\left(|\tau-\bar{F}_n(\widehat{Q}(\tau))|<\delta_n\right)>1-\epsilon/2.
	\end{equation*}
	When the event in the last line occurs,  
	\begin{align*}
		\left|\widehat{V}(\tau)-\widetilde{V}(\tau)\right|
		\leq \sup_{|\tau'-\tau|<\delta_n}\left|\widetilde{V}(\tau')-\widetilde{V}(\tau)\right|,
	\end{align*}
	where $\widetilde{V}$ is the oracle estimator defined in Lemma \ref{lem:tight}. Therefore, for all large $n$ such that $\sqrt{n}V_n(\tau)>4M/\epsilon^2 $,
	\begin{align*}
		&\p\left(\left|\widehat{V}(\tau)-\widetilde{V}(\tau)\right|/V_n(\tau)>\epsilon \right)\\
		\leq&\p\left(|\tau-\bar{F}_n(\widehat{Q}(\tau))|\geq \delta_n\right)
		+\p\left(\sup_{|\tau'-\tau|<\delta_n}\left|\widetilde{V}(\tau')-\widetilde{V}(\tau)\right|>\epsilon V_n(\tau)\right)\\
		\leq&\epsilon/2+2\delta_n/(\epsilon V_n(\tau))
		=\epsilon/2+2M/(\epsilon \sqrt{n}V_n(\tau))<\epsilon,
	\end{align*}
	where we used Lemma \ref{lem:tight} and the Markov inequality in the last line. Since $\epsilon$ can be arbitrarily small,
	\begin{equation*}
		\left|\widehat{V}(\tau)-\widetilde{V}(\tau)\right|/V_n(\tau)\xrightarrow{p}0.
	\end{equation*}
	It remains to show that
	\begin{equation}\label{eqn:consistency}
		\widetilde{V}(\tau)/V_n(\tau)\xrightarrow{p}1.
		%\quad\text{or}\quad\left|\widetilde{V}(\tau)/V_n(\tau)-1\right|\xrightarrow{p}0.
	\end{equation}
	It is straightforward to verify that, using Assumption \ref{ass:pair}
	\begin{align*}
		E\left( \frac{\widetilde{V}(\tau)}{V_n(\tau)}\right) 
		=\frac{E\widetilde{V}(\tau)}{V_n(\tau)}
		=\frac{\sum_{j=1}^{g}\sum_{i\in \mathcal{I}_{n,j}}\p\left[ X_{i}\leq \bar{Q}_{n}(\tau)\right]
			\p\left[ X_{i}>\bar{Q}_{n}(\tau)\right]}{\sum_{i=1}^{n}F_{n,i}(\bar{Q}_{n}(\tau))
			[1-F_{n,i}(\bar{Q}_{n}(\tau))]}
		=1,
	\end{align*}

    Moreover,
	\begin{align*}
		\var\left(\frac{\widetilde{V}(\tau)}{V_n(\tau)} \right) 
		=&(nV_n(\tau))^{-2}\sum_{j=1}^{g}\frac{1}{(m_j-1)^2}
		\var\left(
		\sum_{\ell_1,\ell_2\in \mathcal{I}_{n,j}}\mathds{1}\left[ X_{\ell_1}\leq \bar{Q}_{n}(\tau),\,
		X_{\ell_2}> \bar{Q}_{n}(\tau)\right]
		\right).
	\end{align*}
    But we can rewrite
    \begin{align*}
        \sum_{\ell_1,\ell_2\in \mathcal{I}_{n,j}}\mathds{1}\left[ X_{\ell_1}\leq \bar{Q}_{n}(\tau),\,
		X_{\ell_2}> \bar{Q}_{n}(\tau)\right]
		=&\left(\sum_{\ell_1\in \mathcal{I}_{n,j}}\mathds{1}\left[ X_{\ell_1}\leq \bar{Q}_{n}(\tau)\right] \right) \left(\sum_{\ell_2\in \mathcal{I}_{n,j}}\mathds{1}\left[ X_{\ell_2}>\bar{Q}_{n}(\tau)\right] \right) \\
        =&M_j(m_j-M_j)
    \end{align*}
    for the binomial variable
    \begin{equation*}
        M_j=\sum_{\ell\in \mathcal{I}_{n,j}}\mathds{1}\left[ X_{\ell}\leq \bar{Q}_{n}(\tau)\right]
        \sim \mathcal{B}(m_j,p_j), \quad p_j=F_{n,i}(\bar{Q}_{n}(\tau)),%~\forall 
       \,  i\in \mathcal{I}_{n,j}.
    \end{equation*}
    Using the well-known formulas for the mean and variance of the binomial random variable $M_j$,
    \begin{equation*}
    	E(M_j) \;=\; m_j p_j, \qquad
    	E(M_j^2) \;=\; \var(M_j) + \bigl(E(M_j)\bigr)^2=m_jp_j(1-p_j)+m_j^2p_j^2,
    \end{equation*}
    together with the factorial moment formulas
    \[
    E\bigl((M_j)_r\bigr)
    = E\bigl(M_j(M_j-1)\cdots(M_j-r+1)\bigr)
    = m_j(m_j-1)\cdots(m_j-r+1)p_j^r, \quad r=1,2,\ldots,
    \]
    for the binomial distribution (see \citealp{I1958}), %and the expansion of ordinary moments in terms of factorial moments, we obtain
    \begin{align*}
    	E(M_j^3) \;=&\; E\bigl((M_j)_3\bigr)
    	+ 3E\bigl((M_j)_2\bigr) + E(M_j)
    	\;=\; m_j(m_j-1)(m_j-2)p_j^3
    	+ 3m_j(m_j-1)p_j^2 + m_j p_j,\\
    	E(M_j^4) \;=&\; E\bigl((M_j)_4\bigr)
    	+ 6E\bigl((M_j)_3\bigr)
    	+ 7E\bigl((M_j)_2\bigr) + E(M_j)\\
    	\;=&\; m_j(m_j-1)(m_j-2)(m_j-3)p_j^4
    	+ 6m_j(m_j-1)(m_j-2)p_j^3 + 7m_j(m_j-1)p_j^2 + m_j p_j .
    \end{align*}
 Using these first four moments of $M_j$, we obtain
\begin{align*}
\operatorname{var}\bigl(M_j(m_j-M_j)\bigr)
=&\; m_j(m_j-1)p_j\bigl(1-p_j\bigr)\bigl(m_j-1-2(2m_j-3)p_j(1-p_j)\bigr)\\
\leq&\; m_j(m_j-1)p_j\bigl(1-p_j\bigr)(m_j-1)\\
=&\; (m_j-1)^2 \sum_{i\in \mathcal{I}_{n,j}} F_{n,i}(\bar{Q}_{n}(\tau))\bigl(1-F_{n,i}(\bar{Q}_{n}(\tau))\bigr),
\end{align*}
where the inequality is immediate since $2m_j-3\geq 1>0$. It follows that
	\begin{align*}
		\var\left(\frac{\widetilde{V}(\tau)}{V_n(\tau)} \right)
		\leq& (nV_n(\tau))^{-2}\sum_{j=1}^{g}\sum_{i\in \mathcal{I}_{n,j}}F_{n,i}(\bar{Q}_{n}(\tau))\bigl(1- F_{n,i}(\bar{Q}_{n}(\tau))\bigr)
		= (nV_n(\tau))^{-1}\rightarrow 0.
        \end{align*}
This  yields \eqref{eqn:consistency}. 
\end{proof}

\begin{proof}[Proof of Theorem \ref{thm:clt}]
By Proposition \ref{lem:consistency}, it remains to verify the central limit theorem stated in \eqref{eqn:clt}, namely,
\begin{equation*}
    \sum_{i=1}^{n} Z_{n,i} \xrightarrow{d} \mathcal{N}(0,1), 
    \qquad \mbox{with } \,
    Z_{n,i}= (nV_n(\tau))^{-1/2}\left(\mathds{1}[X_i \leq \bar{Q}_n(\tau)] - \tau\right).
\end{equation*}
By the definition of $V_n(\tau)$,
\begin{equation*}
\sum_{i=1}^{n} \var(Z_{n,i})
= (nV_n(\tau))^{-1} \sum_{i=1}^{n} F_{n,i}(\bar{Q}_n(\tau))\left[1 - F_{n,i}(\bar{Q}_n(\tau))\right]
= 1.
\end{equation*}
Thus it remains to verify Lindeberg's condition. Note that
\begin{equation*}
    \left|Z_{n,i} - E Z_{n,i}\right|
    = (nV_n(\tau))^{-1/2}
      \left|\mathds{1}[X_i \leq \bar{Q}_n(\tau)] - F_{n,i}(\bar{Q}_n(\tau))\right|
    \leq (nV_n(\tau))^{-1/2}.
\end{equation*}
For any $\epsilon > 0$,
\begin{align*}
    &\sum_{i=1}^{n} 
    E\left[\left(Z_{n,i} - E Z_{n,i}\right)^2 
      \mathds{1}\left(\left|Z_{n,i} - E Z_{n,i}\right| > \epsilon\right)\right]\\
    &\leq 
    \sum_{i=1}^{n} 
    E\left[\left(Z_{n,i} - E Z_{n,i}\right)^2
      \mathds{1}\left((nV_n(\tau))^{-1/2} > \epsilon\right)\right] \\
    &= 
    \sum_{i=1}^{n} \var(Z_{n,i})\cdot\mathds{1}\left((nV_n(\tau))^{-1/2} > \epsilon\right)
    =\mathds{1}\left((nV_n(\tau))^{-1/2} > \epsilon\right)\rightarrow 0,
\end{align*}
because $nV_n(\tau) \to \infty$. %under the condition $n^{1/2} V_n(\tau) \to \infty$.
\end{proof}

\end{document}